\documentclass[11pt]{amsart}
\usepackage{amsmath,amssymb,amsthm,mathtools}
\usepackage[margin=1in]{geometry}
\usepackage{url}
\usepackage{hyperref}

\newtheorem{theorem}{Theorem}
\newtheorem{lemma}[theorem]{Lemma}
\newtheorem{proposition}[theorem]{Proposition}

\theoremstyle{definition}
\newtheorem{conjecture}[theorem]{Conjecture}

\DeclareMathOperator{\vol}{vol}
\DeclareMathOperator{\cone}{cone}
\newcommand{\Z}{\mathbb{Z}}
\newcommand{\Q}{\mathbb{Q}}
\newcommand{\R}{\mathbb{R}}
\newcommand{\lam}{\lambda}

\title[Stretched Littlewood--Richardson positivity, at most five parts]{Positivity of stretched Littlewood--Richardson\\ coefficients for partitions of length at most five}
\author{Alper Ferudun}
\email{alper@mercurycodelab.com}
\thanks{This is a computer-assisted proof; all finite verifications are
reproduced by the scripts described in Section~\ref{sec:repro}. Version~2
adds the five-part theorem (Sections~\ref{sec:transfer} and~\ref{sec:linear});
the four-part results of version~1 are unchanged.}
\date{\today}

\begin{document}
\begin{abstract}
For partitions $\lam,\mu,\nu$ the Littlewood--Richardson coefficient
$c^{\nu}_{\lam\mu}$ stretches to a function $P(t)=c^{t\nu}_{t\lam,t\mu}$ which,
by a theorem of Derksen and Weyman, is a polynomial in $t$. King, Tollu and
Toumazet conjectured that $P$ has no negative coefficient. The conjecture is
known only for $c^{\nu}_{\lam\mu}\le 2$ and is otherwise open. We prove it for
all triples whose parts number at most five. For at most four parts the proof
is structural: a period-one dilation transfers the Berline--Vergne local Ehrhart
formula from an integral dilate back to the rational hive polytope, and every
two-dimensional transverse cone spanned by rank-four rhombus normals has
positive weight (minimum $1/9$). Five parts need two further ingredients. A
closed-chamber transfer lemma, resting on Rassart's polynomiality of the
coefficients on the closed cones of a chamber complex, reduces positivity for a
fixed number of parts to full-dimensional hives. For full-dimensional five-part
hives all coefficients but the linear one were already known to be positive;
the linear one is where pointwise positivity of the local weights fails---an
edge cone of weight $-347/109824$ occurs on an actual hive---and its
nonnegativity is proved by an exact global certificate: a rational correction
supported on $6{,}903$ two-face types, balanced by the Minkowski closure of each
two-face polygon, which makes every one of the $87{,}127$ closed edge cones
adjacent to them nonnegative. The same local method yields a rank-uniform
consequence: for every full-dimensional hive polytope of any rank the top four
Ehrhart coefficients are positive. We close by recording the exact general
frontier and the obstructions that rule out the standard shortcuts. All finite
verifications are exact and accompanied by independent replay scripts.
\end{abstract}

\maketitle

\section{Introduction}

Let $\lam,\mu,\nu$ be partitions with $|\lam|+|\mu|=|\nu|$ and let
$c^{\nu}_{\lam\mu}$ be the Littlewood--Richardson (LR) coefficient, the
multiplicity of the Schur function $s_\nu$ in $s_\lam s_\mu$. For a positive
integer $t$ write $t\lam=(t\lam_1,t\lam_2,\dots)$. Derksen and Weyman
\cite{DerksenWeyman} proved that
\[
  P^{\nu}_{\lam\mu}(t)\;:=\;c^{t\nu}_{t\lam,t\mu}
\]
agrees, for all $t\ge 1$, with a polynomial in $t$; Rassart \cite{Rassart}
proved it independently by showing that the coefficients are polynomial on the
cones of a chamber complex, and a short proof was later given in
\cite{ShortPoly}. King, Tollu and Toumazet \cite{KTT2004,KTT2006} studied these
\emph{stretched} coefficients in detail and conjectured a strong positivity
property.

\begin{conjecture}[King--Tollu--Toumazet]\label{conj:ktt}
Every coefficient of $P^{\nu}_{\lam\mu}(t)$, in the monomial basis
$1,t,t^2,\dots$, is nonnegative.
\end{conjecture}

Conjecture~\ref{conj:ktt} is proved for $c^{\nu}_{\lam\mu}=1$
(where $P\equiv 1$, by the Fulton conjecture of Knutson--Tao--Woodward
\cite{KTW}) and for $c^{\nu}_{\lam\mu}=2$ (where $P(t)=t+1$, by Ikenmeyer
\cite{Ikenmeyer} and geometrically by Sherman \cite{Sherman}). Beyond these it
is open, and it is listed as an unsolved problem in the FrontierMath collection
\cite{FrontierMath}. Coquereaux and Zuber \cite{CoqZuber} record the special
case of the linear coefficient in the four-part ($SU(4)$) situation and leave
its nonnegativity as an unproved (``alleged'') assertion. Extensive computation
has produced no counterexample.

We settle the conjecture for at most five parts.

\begin{theorem}\label{thm:main}
Let $\lam,\mu,\nu$ be partitions of length at most five with
$|\lam|+|\mu|=|\nu|$. Then every coefficient of $P^{\nu}_{\lam\mu}(t)$ is
nonnegative.
\end{theorem}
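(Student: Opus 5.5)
The plan follows the outline in the abstract: realize $P(t)$ as an Ehrhart quasi-polynomial of the hive polytope, expand it by the Berline--Vergne local formula, and show the local contributions are positive (at most four parts) or sum to something nonnegative (five parts).

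\textbf{Step 1: reduce to an Ehrhart polynomial.} I will write $c^{\nu}_{\lam\mu}$ as the number of integer points of the hive polytope $H=H(\lam,\mu,\nu)$. This is a rational polytope in the space of hives of size $n\le 5$. Since $tH=H(t\lam,t\mu,t\nu)$, we get $P(t)=\#(tH\cap\Z^N)$. By Derksen--Weyman this Ehrhart quasi-polynomial is in fact a polynomial. Choosing a dilation $d$ with $dH$ integral, I will apply the Berline--Vergne formula
\[
  \#(tdH\cap\Z^N)=\sum_{F}\mu(\text{transverse cone of }F)\,\vol(tdF).
\]
The ``period-one dilation transfer'' then identifies the coefficient of $t^k$ in $P$ with
\[
  \sum_{\dim F=k}\mu(F)\,\vol(F).
\]
This holds because the polynomiality of $P$ forces the coefficientwise identity between the two expansions.

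\textbf{Step 2: at most four parts.} Here positivity is local. The codimension of a face is bounded by the number of independent rhombus inequalities. For $n\le 4$, the transverse cones that can contribute are spanned by rank-four rhombus normals, and there are only finitely many lattice types of these. I would enumerate them exactly and compute their Berline--Vergne weights, aiming for the minimum $1/9$ reported in the abstract. Positive top and codimension-one terms follow from volume and from half the facet normalization. The remaining codimension-two terms are handled by this enumeration, and codimension $\le 3$ suffices because the polytope has small dimension.

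\textbf{Step 3: five parts.} First I reduce to full-dimensional hives. By Rassart, $P$ depends polynomially on $(\lam,\mu,\nu)$ on each closed cone of the chamber complex. Each coefficient of $t^k$ is then a polynomial on the closed cone. Nonnegativity on the relatively open full-dimensional chamber, where hive polytopes are full-dimensional, extends by continuity to the boundary. Lower-dimensional hives therefore inherit positivity from the closures. For full-dimensional $H$, all coefficients except the linear one are already known positive by the local weight argument. The top four are positive for every rank by the same rhombus-cone computation, and the constant term is $1$.

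\textbf{Main obstacle: the linear coefficient.} This is the hard step, because edge cones can have negative Berline--Vergne weight. My first attempt would be a discharging certificate. For each two-face type $G$, use the Minkowski relation that the edge vectors of a polygon, weighted by length, sum to zero, i.e.\ $\sum_{e\subset G}\vol(e)\,u_e=0$. With this I would redistribute weight $\sum_e c_{G,e}\vol(e)$ with zero net total. The rational corrections $c_{G,e}$ would be found by linear programming over all $2$-face types of five-part hives, so that every corrected edge weight $\mu(e)+\sum_G c_{G,e}$ is nonnegative. Exactness is verified in rational arithmetic. The delicate part is proving that the finite list of face and edge cone types is exhaustive, and that the correction is consistent across all hives realizing a type.
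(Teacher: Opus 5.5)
\textbf{The quadratic coefficient is missing.} Your overall architecture matches the paper's: dilation plus Berline--Vergne, local positivity for four parts, a closed-chamber limit for lower-dimensional hives, and a Minkowski-balanced LP certificate for the linear coefficient. But the accounting for full-dimensional five-part hives has a hole. Here $D=6$, so ``the top four'' coefficients are $t^6,t^5,t^4,t^3$, and the constant term is $t^0$. The quadratic coefficient $t^2$ is covered by neither, yet you declare every coefficient except the linear one known.

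This is not just bookkeeping. The $t^2$ coefficient is the codimension-four term, and pointwise positivity already fails there: a single saturated four-normal cell can have weight $-349/28800$. So ``the same rhombus-cone computation'' does not give it. The paper needs a separate codimension-four argument. It enumerates all $\binom{27}{4}$ four-normal tuples and uses the exact slack closure. This shows that a negative cell either forces a fifth independent tight normal, so it is not the transverse cone of a two-face, or it lies inside one of finitely many pointed, rank-preserving closed supersets. All of those supersets have positive weight, with minimum $739/86400$. Without this, or a second discharging certificate at codimension four, your proof does not cover $t^2$.

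\textbf{The linear coefficient needs sharpening.} Your plan here is the paper's, but the two points you call delicate are where the real content lies, and your formulation must be tightened to work.
\begin{itemize}
\item Arbitrary edge corrections $c_{G,e}$ do not have zero net total for every polygon of a given two-face type, because edge lengths vary from hive to hive. The correction must be $\langle y_\tau,u_{E/F}\rangle$: linear in the primitive conormal $u_{E/F}\in\Z^6/N_\tau$, with $y_\tau$ depending only on the two-face type $\tau$. Then Minkowski's relation kills it for every $F$ of that type, and consistency across hives is automatic.
\item Exhaustiveness does not come from listing realizable types. The paper certifies all abstract closed pointed rank-five types from $N_5$ that have a facet in $\mathcal T$, the $6{,}903$ facets of the negative closed types. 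Any other type has $\alpha\ge 0$: if its pulling triangulation contains a negative cell, it is a closed superset of that cell, and negative weight would put all its facets in $\mathcal T$; otherwise $\alpha$ is a sum of positive cell weights.
\item Types must be taken as closed sets, since the tight set along an edge need not be closed.
\end{itemize}

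\textbf{The transfer step.} Your version is a mild variant of the paper's. It uses that integral points in the interior of a chamber have full-dimensional hive polytopes. That is true, since such points lie in the interior of the feasible cone, but you assert it without argument. The paper avoids it with the explicit strict hive $h_*$: the boundaries $mb+b_*$ are full-dimensional, and infinitely many of them lie in one closed chamber, which must then contain $b$.
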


The proof is geometric. By the hive theorem of Knutson and Tao \cite{KnutsonTao}
the coefficient $c^{\nu}_{\lam\mu}$ counts the lattice points of the
\emph{hive polytope} $H=H(\lam,\mu,\nu)$, a rational polytope in the space of
interior hive entries; stretching dilates $H$, so $P^{\nu}_{\lam\mu}$ is the
Ehrhart polynomial of $H$ and its degree is $\dim H$. Ehrhart positivity is
\emph{false} for general lattice polytopes---the Reeve tetrahedra have negative
linear coefficient---so any proof must use the special geometry of hives. Ours
uses the Berline--Vergne local Ehrhart formula \cite{BerlineVergne,RingSchurmann}:
each coefficient is a sum, over the faces of the corresponding dimension, of the
lattice volume of the face times a weight depending only on the face's
transverse cone. Because every rhombus inequality has a primitive normal with
entries in $\{0,\pm1\}$, there are, for a fixed number of parts, finitely many
transverse cones, and positivity questions become finite. Theorem~\ref{thm:main}
assembles the following four statements.

\begin{theorem}\label{thm:four}
Let $\lam,\mu,\nu$ be partitions of length at most four with
$|\lam|+|\mu|=|\nu|$. Then every coefficient of $P^{\nu}_{\lam\mu}(t)$ is
nonnegative. If $c^{\nu}_{\lam\mu}>0$ and the associated hive polytope is
three-dimensional, all four coefficients are strictly positive.
\end{theorem}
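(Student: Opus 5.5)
The plan follows the strategy sketched in the abstract: apply the Berline--Vergne local formula to an integral dilate, transfer it back to the rational hive polytope, and reduce positivity to a finite check on transverse cones. For at most four parts the hive polytope $H=H(\lam,\mu,\nu)$ sits in the space of interior hive entries, which has dimension $\binom{3}{2}=3$. So $P^{\nu}_{\lam\mu}$ has degree at most $3$. The constant term is $1$ whenever $H\neq\emptyset$, by Ehrhart reciprocity or simply because $c^{0}_{00}=1$ (Derksen--Weyman polynomiality gives $P(0)=1$). The leading coefficient is the normalized volume, hence positive. The real work is therefore the linear coefficient, and, when $\dim H=3$, the quadratic coefficient. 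When $\dim H\le 2$ we argue separately: for dimension $\le 1$ all coefficients are volumes or $1$, and for dimension $2$ only the linear coefficient remains, namely half the lattice perimeter plus a correction, which the same local argument handles.

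The first step is the period-one dilation. $H$ is rational, not integral, but $P$ is a genuine polynomial rather than a quasi-polynomial. The plan is to choose $N$ with $NH$ integral and apply Berline--Vergne to $NH$: $L_{NH}(s)=\sum_F \nu(F)\vol(sNF)$, where $\nu(F)$ is the weight of the transverse cone of $F$. We then compare the polynomials $P(Ns)=L_{NH}(s)$. Because $P$ is a polynomial in $t$, the coefficient of $t^k$ equals $N^{-k}$ times the coefficient of $s^k$. The latter is $\sum_{\dim F=k}\nu(F)\vol(NF)=N^k\sum_{\dim F=k}\nu(F)\vol(F)$. Hence $[t^k]P=\sum_{\dim F=k}\nu(F)\vol(F)$, computed directly on $H$. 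One subtlety needs care here: the Berline--Vergne weights depend on the transverse cone together with the lattice, and faces of $H$ may not contain lattice points in their affine hulls. To handle this, I would translate and use that the weight is a function of the cone at a rational vertex, and that $\vol$ is the relative lattice volume in the affine hull's direction lattice. Both behave homogeneously under dilation, which is what makes the transfer work.

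The second step is the finite enumeration. The linear and quadratic coefficients require the weights of edges (two-dimensional transverse cones) and facets (one-dimensional transverse cones). Facet weights are always $1/2$ for a primitive normal, so the quadratic coefficient is half the lattice surface area, which is positive when $\dim H=3$. For edges, the transverse cone is spanned by two rhombus normals, or more if the edge is degenerate, and is projected to the two-dimensional quotient lattice. The rhombus normals have entries in $\{0,\pm1\}$ and number finitely many for $n=4$, so the set of possible pointed two-dimensional cones in quotient lattices is finite. I would enumerate every pair (and every compatible larger set) of rank-four rhombus normals that can be the normal fan of an edge. For each, I compute the two-dimensional cone in the quotient lattice and evaluate its Berline--Vergne weight exactly, using the explicit formula for two-dimensional cones via Dedekind-type sums, as in Ring--Schürmann. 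I would then check that the minimum is $1/9>0$. This gives the linear coefficient as a positive combination of edge lengths. It is strictly positive when $\dim H=3$, and in lower dimension it is still nonnegative after rerunning the check on the appropriate faces, where the transverse cone includes the lineality directions forced by equalities.

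The main obstacle I expect is exactly this enumeration of edge cones. The difficulty is correctly accounting for all edges of possibly non-simple hive polytopes, and for lower-dimensional $H$ whose transverse cones live in quotient lattices that are not spanned by the normals. My first move would be to enumerate over all subsets of rhombus inequalities whose common solution space, together with the boundary equations, has codimension $2$ in the three-dimensional interior space. For each, I would compute the lattice-exact quotient cone and its weight in exact rational arithmetic. Positivity of every such weight settles the theorem.
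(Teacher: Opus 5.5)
Your proposal is correct and is essentially the paper's proof: dilate $H$ to a lattice polytope with the same normal fan (so coefficients scale by $q^k$), read off $a_3$ and $a_2$ as volume and half the lattice surface area, and write $a_1$ as a sum of lattice edge lengths times Berline--Vergne weights of two-dimensional cones spanned by rank-four rhombus normals, all at least $1/9$. Two points the paper handles more simply. First, each edge of a three-dimensional polytope lies in exactly two facets, so only the $99$ nonparallel pairs of normals need checking; further tight rows are redundant. Second, for $\dim H\le 2$ the dilate is a lattice point, segment or polygon, so its linear coefficient is exactly half the lattice perimeter by Pick's theorem. There is no correction term, and no cones with lineality are needed.
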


\begin{theorem}\label{thm:topfour}
For every full-dimensional hive polytope of intrinsic dimension $D$ (any number
of parts), the coefficients of $t^{D},t^{D-1},t^{D-2},t^{D-3}$ in its Ehrhart
polynomial are positive.
\end{theorem}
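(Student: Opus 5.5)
The plan is to use the Berline--Vergne local formula. For a full-dimensional hive polytope $H$ of dimension $D$, the coefficient of $t^{D-k}$ in the Ehrhart polynomial is
\[
\sum_{F\ \text{face},\ \dim F=D-k}\mu(\cone_F)\,\vol(F),
\]
where $\mu(\cone_F)$ is the Berline--Vergne weight of the transverse cone of $F$, a $k$-dimensional pointed cone generated by the primitive normals of the facets containing $F$. Since $H$ is full-dimensional, it is its own relative interior. The facets are given by rhombus inequalities, and their normals are vectors with entries in $\{0,\pm1\}$. For $k=0$ the coefficient is $\vol(H)>0$.

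For $k=1$, each facet has weight $1/(2\|u\|^2)$ in the lattice-normalized sense, which is positive. Since $H$ has at least one facet of positive volume, the coefficient is strictly positive.

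The cases $k=2$ and $k=3$ need a rank-independent description of the cones. I would show that any two or three rhombus normals whose facets meet in a face of codimension two or three of a full-dimensional hive polytope involve only a bounded set of hive coordinates. Rhombus normals are supported on four adjacent vertices of the triangular grid. Two or three rhombi can interact, meaning their normals are not orthogonal, only if they share vertices. If they do not interact, the transverse cone splits as an orthogonal product of lower-dimensional cones. In that case the weight factors through the product property of the Berline--Vergne weights, since $\mu$ of an orthogonal product equals the product of the $\mu$'s in the lattice-compatible situation. One must check that the relevant lattice splits here; the unimodularity of rhombus normals should give this. The weight is then a product of positive one-dimensional weights.

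The interacting configurations are finite up to translation and the symmetries of the hive triangle, and they fit inside a small hive. Their weights can therefore be computed by an exact finite enumeration. Because the local lattice geometry of rank $n$ embeds that of the small configuration, I would rely on the Berline--Vergne weight depending only on the cone together with its lattice. This gives a finite list of cone types valid for every rank. The four-part computation already gives minimum weight $1/9$ for two-dimensional cones, so I expect every two-dimensional type to be positive. For three-dimensional cones I would enumerate all triples of pairwise-adjacent rhombus normals spanning pointed cones and compute their $\mu$ exactly, aiming to show each is positive. Given positive weights, each coefficient is a sum of nonnegative terms, and it is strictly positive as soon as some face of that codimension has positive volume. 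A full-dimensional polytope of dimension at least $k$ always has faces of every dimension, so such a face exists.

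The main obstacle is the $k=3$ step. The weight of a three-dimensional cone need not reduce to a product or to the two-dimensional list, and it is not obvious that the lattice projected to the transverse space is the same for all ranks. I would first prove that the normals of any such cone, together with the ambient lattice, generate a saturated sublattice, using total unimodularity of the hive constraint matrix. Then the transverse lattice is determined by the local configuration alone, and the finite enumeration covers all ranks. If some three-dimensional cone has negative weight, this failure sits exactly at the threshold where the five-part linear coefficient fails. The fallback would be a local regrouping: pair each such cone with its adjacent two-face contributions via a Minkowski-type identity on the small configuration.
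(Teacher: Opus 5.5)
Your architecture matches the paper's proof: Berline--Vergne localization, codimensions $0$ and $1$ from volume and half the lattice surface area, and codimensions $2$ and $3$ from an exact finite enumeration of transverse cone types valid in every rank. The paper finds that every ridge cone has saturation index one or two and weight at least $1/12$, and that every closed codimension-three cone, built from connected-support types on bounded boards, has weight at least $1/264$.

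The step you single out as the key lemma, however, is false. The hive constraint matrix is not totally unimodular, and the normals of a transverse cone need not generate $W^{\perp}\cap\Z^{D}$. Already for four parts the $3\times3$ minors of the atlas take the values $2$ and $4$ (Section~\ref{sec:rank4}):
the odd normals $n_1=(1,-1,-1)$ and $n_2=(-1,1,-1)$ span a two-dimensional cone whose saturated lattice contains $(0,0,-1)=\tfrac12(n_1+n_2)$;
the vertex of Section~\ref{sec:nonsimple} has normal cone spanned by the three odd normals, which has index $4$;
and for five parts the edge cells have saturation indices up to $6$.
So you cannot read weights off the Gram matrix of the generating normals. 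Cones of index greater than one must be subdivided into unimodular cells of the saturated lattice, as the paper does.

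The conclusion you wanted from that lemma needs no unimodularity at all. $W^{\perp}$ is spanned by normals supported on a bounded set $S$ of interior vertices. Hence $W^{\perp}\cap\Z^{D}=W^{\perp}\cap\Z^{S}$, and the orthogonal projection of $\Z^{D}$ onto $W^{\perp}$ equals that of $\Z^{S}$. The cone, its lattice and its dual lattice are therefore local, and the finite table is rank-uniform once it uses the correct lattice.

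The enumeration also needs two repairs.
\begin{itemize}
\item Factor along connected components of the support-overlap graph. Disjoint supports give $\Z^{S_1}\oplus\Z^{S_2}$ and hence the product formula; mere orthogonality of overlapping normals does not split the lattice. You must then enumerate all connected configurations, including chains $n_1\sim n_2\sim n_3$ with $n_1,n_3$ disjointly supported, not only pairwise-adjacent triples.
\item A codimension-three face may lie in more than three facets. Either every simplicial cell must be positive (then triangulate, $\alpha$ being a simple valuation), or, as in the paper, all closed cone types must be checked directly.
\end{itemize}

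With these changes your plan is the paper's proof. The Minkowski-regrouping fallback is not needed at $t^{D-3}$: for five parts that is the cubic coefficient, and the first realizable negative cone occurs only in codimension five, for the linear one.
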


\begin{theorem}\label{thm:sidefive}
For every full-dimensional five-part hive polytope (necessarily of dimension
six) all Ehrhart coefficients except possibly the linear one are positive.
\end{theorem}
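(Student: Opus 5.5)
The statement to prove is Theorem~\ref{thm:sidefive}. A full-dimensional five-part hive polytope has dimension $D=6$, so its Ehrhart polynomial is
\[
P(t)=\sum_{k=0}^{6} e_k t^k .
\]
Theorem~\ref{thm:sidefive} asks for $e_6,e_5,e_4,e_3,e_2,e_0>0$. Theorem~\ref{thm:topfour} already gives $e_6,e_5,e_4,e_3>0$. The constant term is $e_0=1$: the hive polytope is a rational polytope, and $P$ agrees with the Ehrhart quasi-polynomial. By the Derksen--Weyman and Rassart polynomiality the constant term equals $P(0)$, which is the Euler characteristic of the (contractible) polytope. So only the quadratic coefficient $e_2$ needs a new argument.

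For $e_2$ I would use the Berline--Vergne local formula exactly as for the four-part case. We have
\[
e_2=\sum_{F,\ \dim F=2}\vol(F)\,\mu(\mathrm{cone}^\perp_F),
\]
where $\mathrm{cone}^\perp_F$ is the four-dimensional transverse cone spanned by the outer normals of the rhombus inequalities tight on $F$. The rational-versus-integral issue is handled by the same period-one dilation transfer used for Theorem~\ref{thm:four}: the face lattice and transverse cones of $tH$ do not depend on $t$, and the volumes scale. Since each rhombus normal has entries in $\{0,\pm1\}$ in the $6$-dimensional space of interior entries, there are finitely many possible transverse cones. These are the pointed cones generated by subsets of the five-part rhombus normals that actually occur as normal cones of two-faces of some full-dimensional hive.

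The plan is then:
\begin{enumerate}
\item Enumerate all such four-dimensional cones up to the hive symmetry group ($S_3$ rotations/reflections of the triangle together with the $\lam,\mu,\nu$ duality). To know which cones occur, I would use the finite chamber structure: sample one generic hive from each chamber of Rassart's complex, or enumerate the realizable simple/simplicial combinations of tight rhombi.
\item Compute the Berline--Vergne weight $\mu$ of each cone exactly, in rational arithmetic, via its unimodular (Barvinok-style) decomposition and the recursive definition of $\mu$.
\item Verify that each weight is strictly positive.
\end{enumerate}
Since every full-dimensional hive polytope has at least one two-face with positive volume, positivity of all occurring weights gives $e_2>0$.

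The main obstacle is step~3. The abstract warns that pointwise positivity fails in codimension five: an edge cone has weight $-347/109824$. Codimension four might fail too. If a negative four-dimensional weight appears, I would first check whether it is realizable on an actual hive, and then shift weight among cones. Berline--Vergne weights are not canonical, so I could replace them by any valuation-compatible choice, for instance the symmetric one. Failing that, I would add a global correction of the kind used for the linear term: redistribute mass from each negative face to adjacent two-faces whose volumes dominate it, using the Minkowski relation $\sum_{G\supset F}\vol(G)\,\cdots$ around each three-face. I expect, given that the abstract calls the nonquadratic coefficients ``already known'', that the four-dimensional weights are all positive and only enumeration is needed.
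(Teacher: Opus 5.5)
The overall plan is the paper's: Theorem~\ref{thm:topfour} for $t^{6},\dots,t^{3}$, $P(0)=1$, and for $t^{2}$ the local formula \eqref{eq:bv} at codimension four (on the rational polytope via Lemma~\ref{lem:dilation}), finished by an exact finite enumeration. The gap is in that last step, which is the only new content of the theorem. Your prediction that the four-dimensional weights are simply all positive is wrong at the level of cells. Among the $\binom{27}{4}=17550$ four-normal tuples there are $132$ saturated simplicial cells of negative weight, down to $-349/28800$. You rightly restrict to cones that ``actually occur''. But then the proof must \emph{certify} that none of these cells is ever the full transverse cone of a two-face, and your plan has no mechanism for that.

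The missing idea is exact slack closure. The rhombus slacks satisfy linear identities valid for every hive, for instance the hexagon relations $s_a+s_b=s_c+s_d$ around an interior vertex. So if the normals of a negative cell are tight on a face, further normals, not necessarily in the cell's cone, are forced tight. Then one of two things happens:
\begin{itemize}
\item the rank rises to at least five, so the face is not a two-face; or
\item the true transverse cone is one of finitely many pointed rank-four closed supersets, all of positive weight (minimum $739/86400$).
\end{itemize}
The $903$ nonsaturated tuples have minimum weight $1/14400$. This gives pointwise positivity on every cone that can occur, so, unlike the linear coefficient, no Minkowski correction is needed at codimension four.

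Your proposed ways of deciding realizability do not replace this.
\begin{itemize}
\item Testing whether a negative cone ``is realizable on an actual hive'' can exhibit a cone but never exclude one.
\item Enumerating only ``simple/simplicial combinations of tight rhombi'' is insufficient. A negative cell that does occur sits inside a strictly larger closed cone. That cone's weight is a sum over a triangulation whose cells need not be realizable and can be negative, which is exactly how the $132$ negative cells enter.
\item Taking one hive from every open chamber of Rassart's complex would in principle be exhaustive. You would need to add the simple-valuation argument that a two-face cone at a boundary on a wall is subdivided by two-face cones of a nearby generic boundary. But it presupposes computing that chamber complex for $r=5$, whereas the closure argument reduces everything to the $17550$ tuples and their closures.
\end{itemize}
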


\begin{theorem}\label{thm:linear}
For every full-dimensional five-part hive polytope the linear Ehrhart
coefficient is nonnegative.
\end{theorem}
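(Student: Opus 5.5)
The plan is to write the linear coefficient through the Berline--Vergne local formula and show that the resulting finite sum is nonnegative for every full-dimensional five-part hive polytope $H$. Since $\dim H=6$, the linear coefficient is
\[
  e_1(H)=\sum_{E\ \text{edge of } H}\vol(E)\,\mu(T_E),
\]
where $T_E$ is the transverse cone of $E$: a five-dimensional cone spanned by rhombus normals with entries in $\{0,\pm1\}$. Only finitely many such cones occur, and each weight $\mu(T_E)$ is an exact rational number that can be computed by the Berline--Vergne recursion. Were every weight nonnegative we would be done, but the abstract reports a weight of $-347/109824$ on an actual hive. So termwise positivity fails, and the argument has to be global.

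The key step is a redistribution (discharging) argument across two-faces. For each two-face $F$ of $H$ (a polygon), I introduce a rational correction function $\psi_F$ on the edges of $F$. It should depend only on the transverse cone of $F$ and on the edge direction, and should satisfy
\[
  \sum_{E\subset F}\vol(E)\,\psi_F(E)=0 .
\]
Rewriting $e_1(H)=\sum_E \vol(E)\bigl(\mu(T_E)+\sum_{F\supset E}\psi_F(E)\bigr)$ then leaves the value unchanged. The balancing condition is what forces the Minkowski-closure idea. The edge lengths of a polygon with given outer normals are constrained only by the closing relation $\sum_E \vol(E)\,u_E=0$, where the $u_E$ are the edge directions. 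If $\psi_F(E)=\langle w_F,u_E\rangle$ for a single vector $w_F$ in the plane of $F$, suitably normalized by lattice lengths, the sum vanishes identically on every polygon with those normals. This also covers polygons whose degenerate edges have length zero, provided we work with the closure of the set of normal fans.

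It then remains to show that each corrected local weight $\mu(T_E)+\sum_{F\supset E}\langle w_F,u_E\rangle$ is nonnegative. This depends only on the closed edge cone: the transverse cone of $E$ together with the two-face cones containing it. I would enumerate all two-face types and edge-cone types occurring in five-part hives. These come from subsets of rhombus inequalities whose normal cones are realized, and the closed cones (including degenerate realizations) are treated so that limiting configurations are covered as well. With the $w_F$ as unknowns, this is a finite linear feasibility problem with one inequality per closed edge cone. I would solve it by exact rational linear programming, restricted to the two-face types actually adjacent to negative-weight edges to keep it small, and then verify the certificate independently in exact arithmetic.

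The main obstacle is making the enumeration provably complete and getting the LP to be feasible. Completeness requires that every edge cone of every full-dimensional five-part hive appears in the list. I would obtain it by enumerating the faces of the universal hive cone and its chamber structure, or by enumerating combinatorial types of rank-five transverse cones directly, and would cross-check by sampling hives. On feasibility: if the purely linear corrections $\langle w_F,\cdot\rangle$ turn out infeasible, the first fallback is to allow corrections supported on three-faces as well, using the same Minkowski balancing idea one dimension up.
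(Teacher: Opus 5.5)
Your plan matches the paper's proof in all essentials. Both write the linear coefficient as a sum over edges, add corrections $\langle y_\tau,u_{E/F}\rangle$ indexed by two-face types and balanced by each polygon's Minkowski relation, impose one inequality per closed edge type, and solve an LP numerically before re-verifying it exactly over $\Q$. Two-face corrections already suffice (LP optimum $1/768$), so you never need the three-face fallback.

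One step, however, is unjustified as written. You start from $e_1(H)=\sum_E\vol(E)\,\mu(T_E)$ with weights that depend only on the linear transverse cone. That is the Berline--Vergne formula for \emph{lattice} polytopes. Five-part hive polytopes are in general only rational: the edge in Proposition~\ref{prop:negedge} has lattice length $3/2$. For a rational polytope the local weights depend on where the affine transverse cone sits relative to the lattice, so they can change with the dilation factor. The paper closes this gap with the period-one dilation (Lemma~\ref{lem:dilation}). Since $L_H$ is a polynomial (Derksen--Weyman), and $qH$ is a lattice polytope with the same normal fan for suitable $q$, one has $[t^k]L_{qH}=q^k[t^k]L_H$. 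The edge formula then holds verbatim for $H$, with rational lattice lengths. Polynomiality is genuinely used at this point, and you need to invoke it.

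On completeness, which you flag as the main obstacle, realizability never has to be decided; the paper takes your second option.
\begin{itemize}
\item Every edge cone of a full-dimensional five-part hive is a pointed rank-five cone in $e^\perp$ generated by atlas normals. Its closed atlas set $T=\cone(T)\cap N_5\cap e^\perp$ is therefore one of finitely many combinatorial types.
\item Because $\alpha$ is additive over a pulling triangulation on extreme rays, any type with $\alpha<0$ contains one of the $2{,}397$ negative cells. Enumerating all closed supersets of those cells therefore finds every negative type, realized or not.
\item The facets of the negative types form the corrected set $\mathcal T$. The inequalities are imposed on every closed type having a facet in $\mathcal T$ ($87{,}127$ of them).
\item An edge with no facet in $\mathcal T$ cannot be of negative type, since every facet of a negative type lies in $\mathcal T$. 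Its uncorrected weight is therefore already $\ge 0$.
\end{itemize}
Sampling hives or enumerating the chamber structure is then only a sanity check, not part of the proof.
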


\begin{theorem}[closed-chamber transfer]\label{thm:transfer}
Fix $r\ge 1$ and $k\ge 0$. If the coefficient of $t^{k}$ in $P^{\nu}_{\lam\mu}$
is nonnegative for every triple of partitions of length at most $r$ whose hive
polytope is full-dimensional, then it is nonnegative for every triple of
partitions of length at most $r$.
\end{theorem}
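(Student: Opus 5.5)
The plan is to use Rassart's description of $c^{\nu}_{\lam\mu}$ as a piecewise quasi-polynomial (in fact polynomial) function on the cone $C_r\subset\R^{3r}$ of triples $(\lam,\mu,\nu)$ of length at most $r$ with $c^{\nu}_{\lam\mu}>0$ possible (the LR cone, cut out by Horn inequalities). Rassart's theorem gives a chamber complex on this cone such that, on each \emph{closed} chamber $\overline{\sigma}$, there is a polynomial $Q_\sigma$ in $(\lam,\mu,\nu)$ with $c^{\nu}_{\lam\mu}=Q_\sigma(\lam,\mu,\nu)$ for all integral triples in $\overline{\sigma}$ satisfying $|\lam|+|\mu|=|\nu|$. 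For a fixed triple $x=(\lam,\mu,\nu)$, the ray $tx$ stays in the smallest closed chamber containing $x$, so $P^{\nu}_{\lam\mu}(t)=Q_\sigma(tx)$ for every maximal chamber $\sigma$ whose closure contains $x$. Consequently the coefficient of $t^k$ in $P_x$ equals the degree-$k$ homogeneous part $Q_\sigma^{(k)}$ evaluated at $x$, that is, $[t^k]P_x=Q^{(k)}_\sigma(x)$, which is a polynomial and hence continuous function of $x$ on $\overline{\sigma}$.

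Next I relate chambers to full-dimensionality. The hive polytope $H(x)$ is full-dimensional (dimension $\binom{r-1}{2}$, no rhombus inequality an implicit equality) exactly when $x$ lies in the interior of the LR cone in the hyperplane $|\lam|+|\mu|=|\nu|$. Every maximal chamber $\sigma$ is open in that hyperplane and lies in the interior of the LR cone, so every rational point of the open chamber $\sigma$ gives, after clearing denominators, an integral triple with full-dimensional hive polytope. (Scaling $x$ by $m$ multiplies $[t^k]P$ by $m^k$, so signs are unaffected and I may work with rational points.) The hypothesis therefore says $Q_\sigma^{(k)}\ge 0$ on all rational points of the open maximal chamber.

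Now let $x$ be any integral triple of length at most $r$, where possibly $H(x)$ is lower-dimensional or $x$ lies on a Horn wall; if $c^{\nu}_{\lam\mu}=0$ then $P\equiv 0$ and there is nothing to prove. Then $x$ lies in the closure of some maximal chamber $\sigma$ (the chamber complex covers the cone). Rational points of $\sigma$ are dense in $\overline{\sigma}$, so by continuity $Q^{(k)}_\sigma(x)\ge 0$, which is $[t^k]P_x\ge0$. Triples with fewer than $r$ parts are padded with zeros and handled the same way.

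The main obstacle is the first step: I need the polynomial $Q_\sigma$ to be valid on the \emph{closed} chamber, boundary included, and to be a genuine polynomial rather than a quasi-polynomial. Rassart's theorem asserts exactly this for LR coefficients (via the saturation property and Knutson--Tao), but I would check carefully that his chambers are the maximal cells and that boundary agreement holds. The Derksen--Weyman polynomiality in $t$ along each ray is consistent with this. A secondary point to verify is the identification of full-dimensional hives with interior points of the LR cone. Where this is delicate it suffices to note that the hive polytope has constant combinatorial type on open chambers, and that hypothesis applies there.
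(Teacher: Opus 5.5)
Your argument is sound in outline, but it reaches the degenerate boundary differently from the paper. You take an open maximal chamber $\sigma$ with $x\in\overline{\sigma}$, get $Q^{(k)}_\sigma\ge 0$ on its rational points from the hypothesis, and conclude by density and continuity.

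The paper never uses open chambers. It takes any hive $h\in H(b)$ and the strict hive $h_*$ of Lemma~\ref{lem:stricthive}, whose rhombus slacks all equal $1$. Then $mh+h_*$ makes $H(mb+b_*)$ full-dimensional by construction. Pigeonhole puts infinitely many $b_m=mb+b_*$ in one closed cone $C$, closedness gives $b\in C$, and $a_k(b)=\lim a_k(b_m)/m^k\ge 0$. So the paper needs only a finite cover by closed cones carrying polynomials. You need more of the fan structure: maximal chambers open in the hyperplane $|\lam|+|\mu|=|\nu|$, lying in the interior of the LR cone, with closures covering it. In return you get the cleaner statement that $Q^{(k)}_\sigma\ge 0$ on every closed maximal chamber.

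The step you defer, that boundaries in open maximal chambers have full-dimensional hive polytopes, is exactly where the paper's strict hive is needed. Your fallback does not replace it: constancy of combinatorial type on an open chamber says nothing about whether that type is full-dimensional. The fix is short:
\begin{itemize}
\item If $b$ lies in the interior of the LR cone in the hyperplane, then $b-\epsilon b_*$ is still feasible for small $\epsilon>0$. Adding $\epsilon h_*$ to a hive with that boundary gives a hive with boundary $b$ and all slacks at least $\epsilon$, so $H(b)$ is full-dimensional.
\item Perturbing the boundary of $h_*$ while keeping its interior values leaves all slacks positive. Hence the LR cone is full-dimensional in the hyperplane, which is what makes the maximal chambers open there.
\end{itemize}

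Also, for the cone of partition triples your ``exactly when'' holds only in the direction you use. Shifting every part of $\lam_*$ and $\nu_*$ down by $r+1$ gives a triple with $\lam_r=0$, on the boundary of that cone. Its hive polytope is a lattice translate of $H(b_*)$, hence full-dimensional.
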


Theorems~\ref{thm:four} and~\ref{thm:topfour} are the results of the first
version of this paper; their proofs are reproduced in
Sections~\ref{sec:proofmain} and~\ref{sec:uniform}. Theorem~\ref{thm:sidefive}
is the five-part codimension-four certificate of Section~\ref{sec:uniform}.
Theorems~\ref{thm:transfer} and~\ref{thm:linear} are new; they are proved in
Sections~\ref{sec:transfer} and~\ref{sec:linear}.

\begin{proof}[Proof of Theorem~\ref{thm:main} from Theorems~\ref{thm:sidefive},
\ref{thm:linear} and~\ref{thm:transfer}]
Pad the partitions with zeros to length five. For a full-dimensional five-part
hive the constant term is $P(0)=1$, the coefficients of $t^{6},\dots,t^{2}$ are
positive by Theorem~\ref{thm:sidefive}, and the linear coefficient is
nonnegative by Theorem~\ref{thm:linear}. Applying Theorem~\ref{thm:transfer}
with $r=5$ separately for each $k=0,\dots,6$ removes the dimension restriction.
(If the hive polytope is empty then $c^{t\nu}_{t\lam,t\mu}=0$ for all $t\ge1$.)
\end{proof}

Two features of the five-part case deserve emphasis. First, the argument of
Theorem~\ref{thm:four}---every transverse cone has positive weight---holds at
codimensions two, three and (after closure) four, but \emph{fails} for the
edges of five-part hives: an edge with transverse weight $-347/109824$ occurs on
the hive of $\lam=(8,6,3,2)$, $\mu=(7,6,4,2)$, $\nu=(12,10,7,6,3)$
(Proposition~\ref{prop:negedge}). The proof of Theorem~\ref{thm:linear} is
therefore global: it redistributes weight between edges and two-faces using the
Minkowski relations of the two-face polygons, and certifies that the corrected
edge weights are all nonnegative. Second, lower-dimensional five-part hives are
not reduced to four parts by Horn-facet factorisation (the triple
$\lam=(4,3,3,1)$, $\mu=(4,2,1,1)$, $\nu=(6,5,4,2,2)$ has $\dim H=3$ while all
$142$ essential Horn inequalities are strict); Theorem~\ref{thm:transfer}
handles them by a limiting argument inside a closed chamber instead.

Finally, in Section~\ref{sec:frontier} we make precise the obstruction to a
general proof. The local formula reduces Conjecture~\ref{conj:ktt} to a
hive-specific ``effectivity'' statement (HTE) about Berline--Vergne/Todd weights
modulo the Minkowski relations; Theorem~\ref{thm:linear} is exactly its
smallest instance beyond pointwise positivity, and we record why the standard
rank-uniform shortcuts fail.

\medskip

The finite verifications underlying
Theorems~\ref{thm:four}--\ref{thm:linear} are exact (integer and rational
arithmetic throughout) and are reproduced by independent scripts described in
Section~\ref{sec:repro}; the Littlewood--Richardson counts were cross-checked
against two independent tableau/hive enumerators and against published values
\cite{lrcalc}.

\section{The hive model and the local Ehrhart formula}\label{sec:prelim}

We recall the hive description of LR coefficients \cite{KnutsonTao} and fix
coordinates.

A \emph{hive} of size $r$ is a real function on the triangular array
$\{(i,j):0\le j\le i\le r\}$ that is concave across each of the three families
of unit rhombi. With border values
$h(i,0)=\lam_1+\dots+\lam_i$, $h(i,i)=\nu_1+\dots+\nu_i$, and
$h(r,j)=|\lam|+\mu_1+\dots+\mu_j$, the integer hives are in bijection with the
LR tableaux counted by $c^{\nu}_{\lam\mu}$. The interior entries are free
subject to the rhombus inequalities, so the hive polytope
$H(\lam,\mu,\nu)\subset\R^{D}$, $D=\binom{r-1}{2}$, is cut out by a fixed
integer matrix $A_r$ (depending only on $r$) applied to the interior entries,
with a right-hand side that is an integral, homogeneous, linear function of
$(\lam,\mu,\nu)$. Consequently
\[
  H(t\lam,t\mu,t\nu)=t\,H(\lam,\mu,\nu),\qquad
  L_H(t):=\#\bigl(tH\cap\Z^{D}\bigr)=P^{\nu}_{\lam\mu}(t),
\]
so $P^{\nu}_{\lam\mu}$ is the Ehrhart polynomial of $H$ and
$\deg P^{\nu}_{\lam\mu}=\dim H$. For $r=4$ we have $D=3$ and coordinates
$(h_{11},h_{12},h_{21})$; eliminating the border, the $18$ rhombus rows reduce
to $15$ distinct primitive normals, all with entries in $\{0,\pm1\}$
(Section~\ref{sec:rank4}). For $r=5$ we have $D=6$, $30$ rhombus rows and $27$
distinct primitive normals, again with entries in $\{0,\pm1\}$ (at most four
nonzero); we call this set $N_5$.

We use the Berline--Vergne local Euler--Maclaurin formula
\cite{BerlineVergne} (see also \cite{RingSchurmann} for the presentation we
adopt). For a rational polytope $Q\subset\R^{D}$ and each $k$, the coefficient
of $t^{k}$ in $L_{Q}(t)$ equals
\begin{equation}\label{eq:bv}
  [t^{k}]\,L_{Q}(t)=\sum_{\dim F=k}\alpha\bigl(N(Q,F)\bigr)\,\vol_{\Z}(F),
\end{equation}
where the sum runs over the $k$-dimensional faces $F$, $\vol_{\Z}(F)$ is the
relative lattice volume of $F$, and $\alpha$ is a weight depending only on the
transverse cone $N(Q,F)$ of $Q$ along $F$, and invariant under lattice
translation of that cone. The leading term ($k=D$) is the normalized volume and
the codimension-one term ($k=D-1$) is half the normalized surface area; both are
manifestly positive. Throughout, $\alpha$ is computed with the standard
Euclidean scalar product of $\R^{D}$ as the Berline--Vergne complement map.

For the computations of Section~\ref{sec:linear} we make the conventions
explicit. Let $F$ be a $k$-face of a full-dimensional $Q$, let $W=\operatorname{lin}(F-F)$
and $M_F=\Z^{D}\cap W$. The transverse cone $N(Q,F)$ is the $(D-k)$-dimensional
cone in $W^{\perp}$ generated by the primitive outward normals of the facets
containing $F$; its lattice is the saturated lattice $W^{\perp}\cap\Z^{D}$, and
the dual (``feasible'') cone lives in the dual lattice, the orthogonal
projection of $\Z^{D}$ onto $W^{\perp}$. If $n_1,\dots,n_{D-k}$ generate
$N(Q,F)$ and form a $\Z$-basis of $W^{\perp}\cap\Z^{D}$ (a \emph{unimodular}
cell) with Gram matrix $G$, then the feasible cone is generated by the negative
dual basis, whose Gram matrix is $G^{-1}$, and $\alpha$ is the constant term of
the Berline--Vergne function of that unimodular simplicial cone, evaluated by
the defining recursion. For a cone that is not unimodular or not simplicial,
$\alpha$ is the sum of the values of the maximal cells of any subdivision into
unimodular simplicial cones: the constant term is a simple valuation on normal
cones \cite[Cor.~24]{BerlineVergne}, \cite[Lemma~3.3]{CastilloLiu}.

\section{Period-one dilation}\label{sec:dilation}

The next lemma removes any need to know that a hive polytope is a lattice
polytope, and lets us apply the lattice formula \eqref{eq:bv} to the rational
polytopes that occur for five or more parts.

\begin{lemma}\label{lem:dilation}
Fix $r$ and let $N_r$ be the (finite) set of primitive rhombus normals. Suppose
that for every \emph{lattice} polytope $P\subset\R^{D}$ whose facet normals lie
in $N_r$ one has $[t^{k}]L_{P}(t)\ge 0$. Then $[t^{k}]P^{\nu}_{\lam\mu}(t)\ge 0$
for all partitions of length at most $r$.
\end{lemma}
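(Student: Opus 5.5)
The plan is to exploit that the Ehrhart quasi-polynomial of the hive polytope has period one (it is a genuine polynomial), and then pass to an integral dilate where the lattice hypothesis applies. First dispose of the trivial case. If $H=H(\lam,\mu,\nu)$ is empty, then $P^{\nu}_{\lam\mu}\equiv 0$ on $t\ge1$, so every coefficient vanishes. Otherwise $H$ is cut out by the fixed integer matrix $A_r$ with an integral right-hand side. Hence every vertex of $H$ is rational, and we may choose a positive integer $m$, for instance the lcm of all denominators of vertex coordinates, such that $Q:=mH$ is a lattice polytope.

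Second, check that $Q$ satisfies the hypothesis. Dilation by $m$ only multiplies the right-hand side by $m$, so $Q$ is described by the same rhombus inequalities $A_r x\le m b$, whose rows are (multiples of) elements of $N_r$. Thus the facet normals of $Q$ lie in $N_r$. If $H$ is not full-dimensional, I would read the hypothesis for polytopes described by inequalities with normals in $N_r$, some of them tight in both directions (implicit equalities). This matches the setting of \eqref{eq:bv} applied inside the affine hull with the saturated lattice $M_F$. This bookkeeping is the one point where I expect care is needed: the phrase ``facet normals lie in $N_r$'' must be made to cover the lower-dimensional case consistently with how the hypothesis is later verified.

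Third, the key identity. By Derksen--Weyman (or Rassart), $P(t):=P^{\nu}_{\lam\mu}(t)=L_H(t)$ is a polynomial for all $t\ge1$. Therefore
\[
L_Q(t)=\#\bigl(tmH\cap\Z^D\bigr)=L_H(mt)=P(mt)\qquad(t\ge1).
\]
Both sides are polynomials in $t$, so they agree identically. Comparing coefficients gives $[t^k]L_Q=m^k\,[t^k]P$. The hypothesis gives $[t^k]L_Q\ge0$, and dividing by $m^k>0$ yields $[t^k]P^{\nu}_{\lam\mu}\ge0$.

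The essential input is period one. For a general rational polytope, $L_H$ is only a quasi-polynomial, and the relation with the dilate would then only control one constituent. Here the Derksen--Weyman theorem is exactly what makes the transfer lossless.
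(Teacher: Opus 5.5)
Your proof is correct and follows the paper's: dilate $H$ by the lcm of the vertex denominators to get a lattice polytope cut out by the same rhombus rows, then compare coefficients in $L_{mH}(t)=L_H(mt)$. You also make explicit two points the paper leaves implicit, namely that the coefficient comparison needs $L_H$ to be a genuine polynomial (Derksen--Weyman), and that the phrase ``facet normals in $N_r$'' needs interpretation when $H$ is not full-dimensional.
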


\begin{proof}
Let $H=H(\lam,\mu,\nu)=\{x:A_rx\le b\}$ with $b$ integral, and let $q$ be the
least common multiple of the denominators of the vertices of $H$. Then
$qH=\{x:A_rx\le qb\}$ has the same normal fan as $H$---so its facet normals lie
in $N_r$---and its vertices are $q$ times those of $H$, hence integral; thus $qH$
is a lattice polytope. Since $L_{qH}(t)=\#(t\,qH\cap\Z^{D})=L_H(qt)$, we have
$[t^{k}]L_{qH}=q^{k}\,[t^{k}]L_{H}$ with $q>0$, so the two coefficients have the
same sign. Applying the hypothesis to $qH$ gives $[t^{k}]L_H\ge 0$.
\end{proof}

More precisely, the same scaling shows that formula \eqref{eq:bv} holds
verbatim for the rational period-one polytope $H$: scaling preserves the face
poset and every transverse cone and multiplies $k$-face volumes by $q^{k}$. We
use \eqref{eq:bv} for hive polytopes in this form.

\section{The rank-four normal atlas}\label{sec:rank4}

In coordinates $(h_{11},h_{12},h_{21})$ the primitive rank-four rhombus normals
are the $15$ vectors
\[
  \pm e_1,\ \pm e_2,\ \pm e_3,\ \pm(e_1-e_2),\ \pm(e_1-e_3),\ \pm(e_2-e_3),
\]
\[
  (1,-1,-1),\ (-1,1,-1),\ (-1,-1,1),
\]
each with entries in $\{0,\pm1\}$. There are $\binom{15}{2}-6=99$ nonparallel
pairs. Twelve of the fifteen normals are the type-$A_3$ ``alcoved'' directions;
the three remaining ``odd'' rows are what make the atlas fail to be totally
unimodular (over $\binom{15}{3}=455$ triples of rows the determinant multiset is
$\{|{\det}|{=}0{:}146,\ 1{:}272,\ 2{:}36,\ 4{:}1\}$).

\section{Proof of Theorem~\ref{thm:four}}\label{sec:proofmain}

By Lemma~\ref{lem:dilation} we may assume $H$ is a nonempty lattice polytope in
$\R^{3}$ with normals in $N_4$; write $L_H(t)=a_3t^3+a_2t^2+a_1t+1$. (If
$c^{\nu}_{\lam\mu}=0$ then $P\equiv 0$ by saturation \cite{KnutsonTao}, and the
statement is vacuous.) The leading coefficient $a_3$ is the normalized volume
and $a_2$ is half the normalized surface area; both are positive, and if
$\dim H\le 2$ the polynomial is a point, segment, or polygon Ehrhart polynomial,
which is coefficientwise positive. It remains to bound $a_1$ for
$\dim H=3$.

Apply \eqref{eq:bv} with $k=1$: since a three-dimensional polytope has each edge
$E$ as a ridge lying in exactly two facets, the transverse cone $N(H,E)$ is the
two-dimensional cone spanned by the two facet normals meeting along $E$, both in
$N_4$. Hence
\[
  a_1=\sum_{\text{edges }E}\alpha\bigl(N(H,E)\bigr)\,\ell_{\Z}(E),
\]
with $\ell_{\Z}(E)\ge 0$ the lattice length of $E$.

\begin{proposition}\label{prop:codim2}
For every ordered pair of nonparallel normals in $N_4$, the Berline--Vergne
weight $\alpha$ of the corresponding pointed two-dimensional transverse cone is
positive; over the $99$ pairs its minimum value is $1/9$.
\end{proposition}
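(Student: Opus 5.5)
The statement is a finite verification, so the plan is to make the Berline--Vergne weight of a two-dimensional cone explicit and then evaluate it exactly on all $99$ pairs.

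\textbf{Reduction to a plane cone.} For an edge $E$ with direction $u$, let $W=\R u$. The transverse cone lies in the plane $W^{\perp}$, which carries the lattice $\Lambda=W^{\perp}\cap\Z^{3}$ and the Euclidean form restricted from $\R^{3}$. A pair $(n_1,n_2)$ of nonparallel normals determines $u=n_1\times n_2$, up to scaling to a primitive vector. So for each of the $99$ pairs I will compute, in integer arithmetic, the primitive edge direction $u$ and a $\Z$-basis of $\Lambda$. I will then express $n_1,n_2$ in that basis. These are primitive vectors of a rank-two lattice, and the index of the sublattice they span is $|\det(n_1,n_2,u)|$ divided by $|u|^2$ times the appropriate factor. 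Concretely, I will just compute it as a $2\times2$ determinant in the basis.

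\textbf{Closed form for $\alpha$ in dimension two.} For a unimodular two-dimensional cone with Gram matrix $G$ of its generators, the Berline--Vergne constant term has a known closed form. It is $\tfrac14$ plus a correction linear in the ratios of the Gram entries, namely
\[
\tfrac14+\tfrac1{12}\Bigl(\tfrac{g_{12}}{g_{11}}+\tfrac{g_{12}}{g_{22}}\Bigr)
\]
up to sign conventions. I would rederive it from the defining recursion: the codimension-zero value is $1$, the rays get $\tfrac12$, and the two-dimensional value is the constant term of the remaining Todd-type series after the projections are subtracted. I would also sanity-check the sign against the square and the hexagon (the $A_2$ cones), where the answer is known.

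For a non-unimodular cone, I subdivide it by a Hilbert-basis (continued-fraction) subdivision into unimodular cones, inserting the lattice vectors on the lower convex hull. Then I sum the unimodular values of the two-dimensional cells. This uses the simple-valuation property quoted in Section~\ref{sec:prelim}; the interior rays are shared, so only the cells contribute to the constant term. The determinant census of Section~\ref{sec:rank4} suggests the indices are small (at most $4$), so these subdivisions are short.

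\textbf{Evaluation and the expected obstacle.} I would then tabulate all $99$ rational values, exploiting the symmetry group of the atlas to reduce the work. This group contains the $S_3$ permuting coordinates and $x\mapsto -x$, together with the hive's own dihedral symmetries, and it should cut the $99$ pairs to a handful of orbits. The claims to confirm are that every value is positive and that the minimum equals $1/9$, presumably attained at the most obtuse cones, where $g_{12}$ is very negative relative to the norms.

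The main obstacle is getting the conventions exactly right: the lattice in $W^{\perp}$ versus its dual (the projection of $\Z^{3}$), and outward versus inward normals. A sign error there flips the correction term and could turn the minimum negative. I would guard against this by cross-checking the resulting formula \eqref{eq:bv} against brute-force Ehrhart polynomials of a few small lattice polytopes with normals in $N_4$, such as the cube and an alcoved simplex. All arithmetic would be done exactly in $\Q$.
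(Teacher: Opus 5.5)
Your plan is the paper's own argument: the paper likewise evaluates the closed form for a unimodular plane cone exactly over the $99$ atlas pairs, and uses the subdivision convention of Section~\ref{sec:prelim} for non-saturated cones (here only the three odd--odd pairs, each of index two and each giving $5/18$). The one thing to pin down is the sign you flagged: your ``$+$'' formula is correct when $g$ is the Gram matrix of the feasible cone (which is $G^{-1}$ for a normal basis with Gram $G$), and rewritten in the normals it becomes the paper's $\tfrac14-\tfrac{C}{12}\bigl(\tfrac1A+\tfrac1B\bigr)$, so the minimum $1/9$ occurs at acute normal pairs such as $(1,-1,0),(1,-1,-1)$, i.e.\ at obtuse feasible cones, not at obtuse normal cones.
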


Proposition~\ref{prop:codim2} is a finite exact computation
(Section~\ref{sec:repro}); the weight of a saturated normal basis with Gram
matrix $\left(\begin{smallmatrix}A&C\\ C&B\end{smallmatrix}\right)$ equals
$\tfrac14-\tfrac{C}{12}\bigl(\tfrac1A+\tfrac1B\bigr)$, and evaluating this over
the atlas gives values in $\{1/9,\,5/18,\,1/4\}$, all positive. Granting it,
\[
  a_1\ \ge\ \tfrac19\sum_{\text{edges }E}\ell_{\Z}(E)\ >\ 0,
\]
so all four coefficients of $L_H$ are positive. Undoing the dilation
(Lemma~\ref{lem:dilation}) proves Theorem~\ref{thm:four}. \qed

\subsection{Nonsimple vertices do not obstruct the argument}\label{sec:nonsimple}
The hypotheses use only that $a_1$ is edge-local. For definiteness we note that
four-part hives really do have nonunimodular vertex cones: at
$\lam=\mu=(12,8,4)$, $\nu=(18,14,10,6)$ the vertex $(26,32,38)$ is simple with
primitive tangent rays $(0,1,1),(1,0,1),(1,1,0)$, whose determinant is $2$
(and the polytope has $c^{\nu}_{\lam\mu}=50$, all vertices integral,
$L_H(t)=1+7t+18t^2+24t^3$). This has no bearing on \eqref{eq:bv} for $k=1$: each
edge still lies in exactly two facets, and any further rhombus rows tight along
an edge are redundant on, or interior to, its two-dimensional transverse cone.

\section{Rank-uniform coefficients and the five-part codimension-four
certificate}\label{sec:uniform}

Formula \eqref{eq:bv} localizes each coefficient on faces of a fixed codimension,
whose transverse cones, for hives of any rank, are generated by the fixed normal
set $N_r$. Enumerating the possible closed transverse cones and evaluating
$\alpha$ exactly gives Theorem~\ref{thm:topfour}: the volume and surface terms
handle codimensions $0,1$, and exact rank-uniform positivity of $\alpha$ on every
closed codimension-two and codimension-three hive transverse cone handles $t^{D-2}$
and $t^{D-3}$. (Codimension two: every ridge cone has saturation index one or
two and weight at least $1/12$. Codimension three: the connected-support types
live on boards of size at most $21$, and every closed cone has weight at least
$1/264$.)

For Theorem~\ref{thm:sidefive} ($r=5$, $D=6$) one further coefficient, the
quadratic, is controlled at codimension four. Here termwise positivity already
fails---an individual saturated four-normal cell can have weight $-349/28800$---%
but the exact slack closure forces such a cell either to raise the normal rank to
at least five or to embed in one of finitely many pointed rank-preserving closed
supersets, and every one of those full cones has positive weight (minimum
$739/86400$). Enumerating all $\binom{27}{4}=17550$ four-normal tuples and their
closures (903 nonsaturated tuples with minimum weight $1/14400$; 132 negative
saturated cells, each repaired) yields positivity of $t^{6},\dots,t^{2}$, leaving
only the linear coefficient. The full enumeration and its independent replay are
in the codimension-four report of the reproducibility bundle. \qed

\section{Closed-chamber transfer}\label{sec:transfer}

Fix $r$ and put $D=\binom{r-1}{2}$. A \emph{boundary} is an integral triple
$b=(\lam,\mu,\nu)$ of partitions of length at most $r$ with $|\lam|+|\mu|=|\nu|$;
it is \emph{feasible} if $H(b)\ne\emptyset$. For a feasible $b$ write
\[
  P_b(t)=c^{t\nu}_{t\lam,t\mu}=\sum_{k=0}^{D}a_k(b)\,t^{k},
\]
with zero coefficients above $\dim H(b)$.

The input is the chamber description of the coefficients. Rassart
\cite[Theorem~2.3, Corollary~2.5 and Theorem~4.1]{Rassart} expresses
$c^{\nu}_{\lam\mu}$ (for partitions of length at most $r$) as a vector
partition function and proves that on each cone of the resulting chamber
complex it is given by a polynomial in $(\lam,\mu,\nu)$. The chamber complex is
a finite polyhedral subdivision of the support cone into closed cones, and the
underlying quasi-polynomiality statement, Sturmfels' Theorem~1 in
\cite{Sturmfels}, holds for all lattice points of each closed chamber. We use
only the following consequence: there are finitely many closed polyhedral cones
$C\subset\R^{3r}$, whose union contains every feasible boundary, and polynomials
$F_C$ with
\begin{equation}\label{eq:chamber}
  c^{\nu}_{\lam\mu}=F_C(\lam,\mu,\nu)\qquad\text{for every integral boundary }
  (\lam,\mu,\nu)\in C.
\end{equation}

\begin{lemma}[a strict hive]\label{lem:stricthive}
The function
\[
  h_*(i,j)=3ri+2rj-(i^2-ij+j^2),\qquad 0\le j\le i\le r,
\]
is an integral hive of size $r$ all of whose rhombus slacks are equal to $1$.
Its boundary $b_*=(\lam_*,\mu_*,\nu_*)$ is given by
$\lam_{*,i}=\mu_{*,i}=3r-2i+1$ and $\nu_{*,i}=5r-2i+1$ for $1\le i\le r$.
\end{lemma}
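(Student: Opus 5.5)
The plan is a direct computation. The key observation is that $h_*$ is a quadratic polynomial in $(i,j)$. Every rhombus slack of a hive is a mixed second difference of $h$. For a quadratic function such a difference kills the linear part and depends only on the rhombus direction, not on its position. So it suffices to evaluate the quadratic part on the three rhombus orientations and then read off the border differences.

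\emph{Step 1 (slacks).} In the array $\{(i,j):0\le j\le i\le r\}$ the unit edge vectors are $a=(1,0)$, $b=(0,1)$ and $c=a+b=(1,1)$. The three families of unit rhombi are spanned by the pairs of edge vectors meeting at $60^\circ$: $(a,c)$, $(b,c)$ and $(a,-b)$.

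For a rhombus with acute vertex $x$ spanned by $u,v$, concavity across its short diagonal reads
\[
h(x+u)+h(x+v)-h(x)-h(x+u+v)\ge 0,
\]
and the left-hand side is the slack. Write $h_*=\ell+q$ with $\ell$ linear and $q(i,j)=-(i^2-ij+j^2)$, and let $B$ be the symmetric bilinear form of $q$. Then the slack equals $-2B(u,v)$, that is,
\[
2u_1v_1-(u_1v_2+u_2v_1)+2u_2v_2 .
\]
Substituting the three pairs $(a,c)$, $(b,c)$, $(a,-b)$ gives $1$ in each case. Every rhombus of each family is a translate of one of these, so all rhombus slacks equal $1$. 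In particular $h_*$ is concave across every rhombus, so it is a hive. It is integral because it is an integer polynomial.

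\emph{Step 2 (boundary).} Using the border conventions of Section~\ref{sec:prelim}, compute the differences along the three sides:
\begin{itemize}
\item $h_*(i,0)=3ri-i^2$ gives $\lam_{*,i}=h_*(i,0)-h_*(i-1,0)=3r-2i+1$;
\item $h_*(i,i)=5ri-i^2$ gives $\nu_{*,i}=5r-2i+1$;
\item $h_*(r,j)=2r^2+3rj-j^2$ gives $\mu_{*,j}=3r-2j+1$.
\end{itemize}
Consistency checks: $h_*(0,0)=0$, and $h_*(r,0)=2r^2=\sum_i(3r-2i+1)=|\lam_*|$, as the convention $h(r,j)=|\lam|+\mu_1+\dots+\mu_j$ requires. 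The three sequences are strictly decreasing and positive for $1\le i\le r$, since their last terms are $r+1$ and $3r+1$. Hence they are genuine partitions of length $r$, and $|\lam_*|+|\mu_*|=4r^2=|\nu_*|=h_*(r,r)$.

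The only step needing care is fixing the orientation convention for the third rhombus family, namely using $(a,-b)$ rather than $(a,b)$, which meet at $120^\circ$. The wrong choice would flip the sign of the concavity inequality. I would double-check this on one explicit rhombus, say the one with vertices $(1,0),(2,0),(2,1),(1,1)$ rather than a triangle-adjacent pair sharing the wrong diagonal. Everything else is routine algebra.
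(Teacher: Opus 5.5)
Your proof is correct and is essentially the paper's argument: the linear part has zero slack, the quadratic part $-(i^2-ij+j^2)$ has constant slack $1$ on each of the three rhombus types, and the border values $3ri-i^2$, $5ri-i^2$, $2r^2+3rj-j^2$ have successive differences equal to the stated parts. The only cosmetic difference is that you evaluate $-2B(u,v)$ directly on all three $60^\circ$ pairs $(a,c),(b,c),(a,-b)$, whereas the paper checks one type in the coordinates $(x,y)=(j,i-j)$ and appeals to symmetry. Your caution about orientation is well placed: the $120^\circ$ expression $q(i+1,j)+q(i,j+1)-q(i,j)-q(i+1,j+1)$ equals $-1$ in the original $(i,j)$ coordinates.
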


\begin{proof}
The linear part has zero slack on every rhombus, and for the quadratic part
$q(i,j)=-(i^2-ij+j^2)$ direct expansion gives slack exactly $1$ on each of the
three rhombus types (for instance
$q(i+1,j)+q(i,j+1)-q(i,j)-q(i+1,j+1)=1$ after the change of coordinates
$(x,y)=(j,i-j)$ that takes the three types into one another). The boundary
values are $h_*(i,0)=3ri-i^2$, $h_*(i,i)=5ri-i^2$ and
$h_*(r,j)=2r^2+3rj-j^2$, whose successive differences are the stated parts;
they are positive and decreasing, and $|\lam_*|+|\mu_*|=4r^2=|\nu_*|$.
\end{proof}

\begin{proof}[Proof of Theorem~\ref{thm:transfer}]
Let $b$ be a feasible boundary; we must show $a_k(b)\ge0$. Choose any point
$h\in H(b)$ and, for each integer $m\ge1$, consider $mh+h_*$. It is a hive with
boundary $b_m:=mb+b_*$, an integral boundary (sums of partitions with the trace
identity), and each of its rhombus slacks is $m\cdot(\text{slack of }h)+1\ge1$.
Hence a neighbourhood of $mh+h_*$ in $\R^{D}$ lies in $H(b_m)$: the polytope
$H(b_m)$ is full-dimensional, and by hypothesis $a_k(b_m)\ge0$.

By finiteness of the chamber cover, some closed cone $C$ contains $b_m$ for
infinitely many $m$; pass to that subsequence. Since $C$ is a cone,
$b_m/m=b+b_*/m\in C$, and since $C$ is closed, $b\in C$. Decompose
$F_C=\sum_{j\ge0}F_{C,j}$ into homogeneous parts. For an integral $x\in C$ and
every integer $t\ge1$ the point $tx$ is an integral boundary in $C$, so by
\eqref{eq:chamber} $P_x(t)=F_C(tx)=\sum_j F_{C,j}(x)t^{j}$ for infinitely many
$t$; hence $a_j(x)=F_{C,j}(x)$ for all $j$. Applying this to $x=b$ and to the
$x=b_m$, homogeneity and continuity of $F_{C,k}$ give
\[
  a_k(b)=F_{C,k}(b)=\lim_{m\to\infty}\frac{F_{C,k}(b_m)}{m^{k}}
  =\lim_{m\to\infty}\frac{a_k(b_m)}{m^{k}}\ \ge\ 0,
\]
the limits taken along the subsequence. If $b$ is not feasible then
$P_b\equiv0$.
\end{proof}

Only nonnegativity survives the limit; strict positivity need not. The proof
does not use vertex integrality, any identification of lattices at a dimension
drop, or Horn factorisation: the continuous objects are the homogeneous parts
of the chamber polynomials. An exact finite check of the mechanism (degenerate
boundaries at $r=3,4$ against the profiles of $mb+b_*$, $m\le 6$) is included in
the bundle.

\section{The linear coefficient of full-dimensional five-part hives}
\label{sec:linear}

Throughout this section $r=5$, $D=6$, and we use the coordinates
$(x,y)=(j,i-j)$ on the hive triangle, so that $h(0,y)=\lam_1+\dots+\lam_y$,
$h(x,0)=\nu_1+\dots+\nu_x$ and $h(x,5-x)=|\lam|+\mu_1+\dots+\mu_x$; the six
interior vertices are ordered $(1,1),(1,2),(1,3),(2,1),(2,2),(3,1)$, and vectors
in $\Z^{6}$ below refer to this order. $N_5$ is the set of $27$ primitive
outward rhombus normals. By Lemma~\ref{lem:dilation} and \eqref{eq:bv},
\begin{equation}\label{eq:edgeformula}
  a_1(H)=\sum_{\text{edges }E}\ell_{\Z}(E)\,\alpha\bigl(N(H,E)\bigr),
\end{equation}
where, for an edge $E$ with primitive direction $e\in\Z^{6}$, the cone $N(H,E)$
is a pointed five-dimensional cone in $e^{\perp}$, generated by the normals in
$N_5\cap e^{\perp}$ tight along $E$, with lattice $e^{\perp}\cap\Z^{6}$.

\subsection{Cells}\label{sec:cells}
A \emph{cell} is a five-element subset $S\subset N_5$ of rank five; it
determines $e$ (the primitive kernel vector, up to sign) and a simplicial cone
$\cone(S)\subset e^{\perp}$. Of the $\binom{27}{5}=80{,}730$ five-subsets,
$52{,}680$ are cells, spanning $557$ edge directions; their saturation indices
in $e^{\perp}\cap\Z^{6}$ are $1$ ($45{,}465$ cells), $2$ ($6{,}741$), $3$
($279$), $4$ ($177$) and $6$ ($18$). For each cell we compute $\alpha(\cone(S))$
exactly: a $\Z$-basis of $e^{\perp}\cap\Z^{6}$ is obtained by unimodular column
operations, the cell is written in that basis, a cell of index greater than one
is star-subdivided at a lattice point of its half-open fundamental
parallelepiped (which strictly lowers the index of every child) until all cells
are unimodular, and each unimodular cell contributes the constant term of the
defining recursion applied to the inverse of its Gram matrix. Exactly
$2{,}397$ cells have negative weight; none has weight zero; the minimum,
$-9427939/128620800$, is attained at the cell of index $3$ with
$e=(1,0,-1,1,0,1)$ generated by $(-1,-1,0,1,0,0)$, $(0,-1,1,1,-1,0)$,
$(0,0,0,1,-1,-1)$, $(0,1,0,-1,-1,1)$ and $(1,-1,0,-1,1,0)$.

For every pointed cone $\sigma\subset e^{\perp}$ generated by a subset of
$N_5\cap e^{\perp}$, the pulling triangulation of $\sigma$ on its extreme rays
has cells as above, and $\alpha(\sigma)$ is the sum of their weights (simple
valuation). This gives $\alpha$ for every possible edge cone from the finite
table of cell weights.

\subsection{Validation and the failure of pointwise positivity}
Formula \eqref{eq:edgeformula} with the cell table was checked against direct
lattice-point interpolation on $887$ full-dimensional five-part hives (the
witness pool of the codimension-four certificate): the edge sums agree with the
interpolated linear coefficients in all $887$ cases ($25{,}271$ edges over the
first $300$ hives alone, $10{,}212$ distinct edge cones over all $887$). The
same computation shows that pointwise positivity, which carried
Theorem~\ref{thm:four} and the codimension-two, -three and -four arguments,
breaks down for edges.

\begin{proposition}\label{prop:negedge}
Let $H=H(\lam,\mu,\nu)$ with $\lam=(8,6,3,2)$, $\mu=(7,6,4,2)$,
$\nu=(12,10,7,6,3)$; it is six-dimensional with $56$ vertices, $283$ edges and
$L_H(t)=1+\tfrac{107}{20}t+\dots$. The edge $E$ through the point
$x=(19,24,26,27,31,34)\in H$, with primitive direction $e=(1,0,0,-1,0,-1)$, has
exactly the five tight normals
\[
  (-1,1,0,-1,0,0),\ (0,0,0,-1,0,1),\ (0,0,1,0,0,0),\ (0,1,-1,0,0,0),\
  (0,1,0,-1,-1,1),
\]
which form a unimodular cell, has lattice length $3/2$, and
\[
  \alpha\bigl(N(H,E)\bigr)=-\frac{347}{109824}<0.
\]
\end{proposition}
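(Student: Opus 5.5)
This is a finite exact verification, and I would carry it out in the order the statement lists its claims.

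\emph{Step 1: set up the hive system.} Write down the five-part boundary from $\lam,\mu,\nu$ padded to length five, so $\lam=(8,6,3,2,0)$ and $\mu=(7,6,4,2,0)$. In the coordinates $(x,y)$ of this section the border values are
\[
h(0,y)=\lam_1+\dots+\lam_y,\qquad h(x,0)=\nu_1+\dots+\nu_x,\qquad h(x,5-x)=|\lam|+\mu_1+\dots+\mu_x .
\]
Then assemble the thirty rhombus inequalities $A_5x\le b$ on the six interior entries.

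\emph{Step 2: check the point and the edge.} Substitute $x=(19,24,26,27,31,34)$ into all thirty rows. I expect exactly the rows with the five listed normals to be tight, and all others to be strict. Next check that the five normals have rank five; their kernel is then one-dimensional. Exhibit it as spanned by $e=(1,0,0,-1,0,-1)$ by computing the five inner products, each of which should be $0$. Because the tight set at $x$ defines a one-dimensional face, $x$ lies in the relative interior of an edge $E$ with direction $e$.

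\emph{Step 3: lattice length.} Move from $x$ along $\pm e$ until the first non-tight row becomes tight; the two ratio tests give the endpoints $x+s_-e$ and $x+s_+e$. The lattice length is then $\ell_{\Z}(E)=s_+-s_-$, since $e$ is primitive, and I expect it to equal $3/2$. This also exhibits half-integral vertices, consistent with the period-one dilation of Lemma~\ref{lem:dilation}.

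\emph{Step 4: unimodularity of the cell.} Compute a $\Z$-basis of $e^{\perp}\cap\Z^{6}$ by unimodular column operations. Express the five normals in that basis and check that the resulting $5\times5$ determinant is $\pm1$. Equivalently, check that the gcd of the $5\times5$ minors of the $5\times6$ normal matrix equals the content of $e$, which is $1$.

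\emph{Step 5: compute the weight.} Since the cell is unimodular and simplicial, $\alpha(N(H,E))$ is the constant term of the Berline--Vergne recursion applied to the inverse Gram matrix $G^{-1}$ of the five normals, exactly as in Section~\ref{sec:cells}. No subdivision is needed. Evaluate this in exact rationals, and confirm it agrees with the entry of the cell table for this cell, namely $-347/109824$.

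\emph{Step 6: global data.} Enumerate the vertices by solving every rank-six subsystem and keeping the feasible solutions; this should give $56$. Compute the edges from pairs of vertices whose common tight rows have rank five; this should give $283$. Obtain the linear coefficient $107/20$ in two ways: by Lagrange interpolation of $L_H$ at $t=0,\dots,6$ from lattice-point counts of $tH$, and by summing \eqref{eq:edgeformula} over the edges as a consistency check.

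The main obstacle is Step 5. The Berline--Vergne constant term in dimension five has no closed formula comparable to the two-dimensional one in Proposition~\ref{prop:codim2}, and it must be computed by the recursion with its complement-map conventions (Euclidean scalar product) exactly right. An error of convention would change the number without any visible warning. I would guard against this in two ways: validate the implementation first on the codimension-two formula $\tfrac14-\tfrac{C}{12}(\tfrac1A+\tfrac1B)$, and rely on the $887$-hive agreement with interpolation described above, where the interpolated $107/20$ for this very hive provides an independent cross-check on the whole edge sum.
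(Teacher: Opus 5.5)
Your proposal is correct and matches the paper, which treats Proposition~\ref{prop:negedge} as a finite exact computation replayed by \texttt{r5\_codim5\_edge\_census.py}. The ingredients are the same as yours: the five tight rows at $x$, the kernel vector $e$, a ratio test along $\pm e$, unimodularity of the cell, and the constant term of the Berline--Vergne recursion on $G^{-1}$ for a single unimodular cell. The ratio test does give $s\in[-1,\tfrac12]$, hence length $3/2$, and six-dimensionality already follows from your Step~2, because linearly independent tight normals at a feasible point leave a strictly feasible direction.
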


Thus no argument that bounds the weight of each edge cone separately can prove
Theorem~\ref{thm:linear}; the negative weight has to be compensated by
neighbouring edges. Of the negative cells, exactly $90$ occur as the full
transverse cone of an edge of some five-part hive (all of them unimodular
cells; $-347/109824$ is the most negative realizable weight), and the
compensation must be exact.

\subsection{Two-face closure}\label{sec:closure}
Let $F$ be a two-face of $H$ with transverse cone $\tau=N(H,F)$ (rank four),
and let $N_\tau=\Z^{6}\cap\operatorname{lin}(\tau)$. The quotient
$\Z^{6}/N_\tau\cong\Z^{2}$ is the dual of the lattice $M_F$ of the plane of
$F$. For an edge $E\subset F$ the cone $N(H,E)$ contains $\tau$ as a facet and
its image in $\Z^{6}/N_\tau$ is a ray; let $u_{E/F}$ be its primitive
generator. It is the primitive outward conormal of $E$ inside the polygon $F$,
so Minkowski's relation for $F$ reads
\begin{equation}\label{eq:polygon}
  \sum_{E\subset F}\ell_{\Z}(E)\,u_{E/F}=0\qquad\text{in }\Z^{6}/N_\tau,
\end{equation}
valid also for rational $F$ with rational lengths. Consequently, for any
assignment of a vector $y_\tau\in\Q^{2}$ to each two-face cone $\tau$ (in a
fixed $\Z$-basis of $\Z^{6}/N_\tau$),
\begin{equation}\label{eq:corrected}
  a_1(H)=\sum_{\text{edges }E}\ell_{\Z}(E)\Bigl[\alpha\bigl(N(H,E)\bigr)
  +\sum_{F\supset E,\ \dim F=2}\bigl\langle y_{N(H,F)},u_{E/F}\bigr\rangle\Bigr],
\end{equation}
since summing the correction over the edges of each two-face gives
$\langle y_\tau,\sum_E\ell_{\Z}(E)u_{E/F}\rangle=0$.

Cones are identified with their atlas sets: the \emph{type} of an edge is
$N_5\cap N(H,E)$ (a closed set of normals, $T=\cone(T)\cap N_5\cap e^{\perp}$)
and the type of a two-face is $N_5\cap N(H,F)$; the latter is the same set
whichever edge of $F$ one comes from, and the quantities $\alpha$ and $u$ depend
only on these types. The set of tight normals along an edge need not be closed
(on the $300$ hives above, $9{,}014$ of $25{,}271$ edges have a tight set whose
cone contains further normals, whose inequalities are then redundant near the
edge); this is why types are defined as closed sets.

\subsection{The certificate}\label{sec:certificate}
Let $\Sigma^{-}$ be the set of closed pointed rank-five types
$T\subset N_5\cap e^{\perp}$ (over all $e$) that contain a negative cell; it
includes every type whose pulling triangulation contains a negative cell, since
the cells of that triangulation consist of extreme rays of $T$. Let
$\Sigma^{-}_{<0}$ be those with $\alpha(\cone(T))<0$. Enumerating all closed
supersets of the $2{,}397$ negative cells (adding one normal at a time and
closing reaches every closed superset) gives $|\Sigma^{-}|=35{,}796$ over $69$
directions, of which
$|\Sigma^{-}_{<0}|=3{,}435$ (with $5,6,7,8$ extreme rays in $2397$, $867$,
$153$, $18$ cases). Let $\mathcal T$ be the set of facets of the cones in
$\Sigma^{-}_{<0}$, as two-face types; $|\mathcal T|=6{,}903$.

\begin{proposition}\label{prop:certificate}
Let $\Sigma_{\mathcal T}$ be the set of all closed pointed rank-five types
$\sigma$ (over all edge directions) having at least one facet in $\mathcal T$;
$|\Sigma_{\mathcal T}|=87{,}127$, over $551$ directions. There are rational
vectors $y_\tau\in\Q^{2}$, $\tau\in\mathcal T$, with denominators at most
$10^{4}$, such that for every $\sigma\in\Sigma_{\mathcal T}$
\[
  \alpha(\cone\sigma)+\sum_{\tau\in\mathcal T\text{ facet of }\sigma}
  \langle y_\tau,u_{\sigma/\tau}\rangle\ \ge\ 1.20\times10^{-3}.
\]
\end{proposition}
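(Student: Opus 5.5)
Proposition~\ref{prop:certificate} is a finite existence statement about rational vectors, so the proof I would give is a computer-assisted certificate: find the $y_\tau$ by linear programming, then verify the inequalities in exact rational arithmetic. The steps, in order, are as follows.

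\emph{Step 1: enumerate the index sets.} For each two-face type $\tau\in\mathcal T$ I fix a $\Z$-basis of $\Z^{6}/N_\tau$. I compute it by Smith/Hermite normal form of the saturation of $\operatorname{lin}(\tau)\cap\Z^6$, which makes $u_{\sigma/\tau}$ a well-defined primitive vector of $\Z^2$. Next I enumerate $\Sigma_{\mathcal T}$. For each $\tau\in\mathcal T$ and each normal $n\in N_5\setminus\operatorname{lin}(\tau)$ such that $\tau\cup\{n\}$ has rank five, I take the closure $\cone(\tau\cup\{n\})\cap N_5\cap e^{\perp}$. I then enlarge by adding normals one at a time and closing, exactly as was done for $\Sigma^-$. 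I keep only the pointed types and deduplicate; this should reproduce the count $87{,}127$ over $551$ directions.

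\emph{Step 2: compute the data for each type.} For each $\sigma$ I compute $\alpha(\cone\sigma)$ from the cell table of Section~\ref{sec:cells} via the pulling triangulation on its extreme rays. I also list its facets, i.e.\ the rank-four subsets cut out by supporting hyperplanes, and record which of these lie in $\mathcal T$. For each such facet $\tau$, $u_{\sigma/\tau}$ is the primitive generator of the image ray of $\cone\sigma$ in $\Z^6/N_\tau$, oriented by any generator of $\sigma$ not in $\operatorname{lin}\tau$.

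\emph{Step 3: solve and round.} The conditions are linear in the $2|\mathcal T|=13{,}806$ unknowns. I solve the LP
\[
  \max\ s\quad\text{subject to}\quad \alpha(\cone\sigma)+\sum_{\tau}\langle y_\tau,u_{\sigma/\tau}\rangle\ge s\quad(\sigma\in\Sigma_{\mathcal T}),
\]
with box bounds on $y$ to keep the problem bounded, in floating point. I then round the optimizer to rationals with denominators at most $10^4$, for instance by rounding to a common grid or by continued fractions. Finally I re-evaluate every left-hand side exactly and check that each is at least $1.20\times10^{-3}$. The claimed margin is what makes rounding safe: if the floating optimum $s^*$ comfortably exceeds $1.2\times10^{-3}$, the perturbation from rounding is bounded by $\max_\sigma\sum_\tau\|u_{\sigma/\tau}\|_1\cdot 10^{-4}/2$, and the exact check settles the matter.

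The main obstacle is Step~3: whether the LP is feasible at all with a positive margin, since nothing a priori guarantees that the $3{,}435$ negative types can be compensated. Each $y_\tau$ is shared between the negative cone and every other rank-five type containing $\tau$ as a facet. That sharing is exactly why $\Sigma_{\mathcal T}$ must include all $87{,}127$ such types and not only the negative ones: any correction pushed onto a neighbour must not make that neighbour negative. If the LP is infeasible, I would first enlarge $\mathcal T$ to facets of facet-neighbours. Consistency with the Minkowski relation~\eqref{eq:polygon} is automatic, so the only other failure mode is bookkeeping of orientations of $u_{\sigma/\tau}$. I would guard against this with an independent replay script that recomputes the closures, weights and conormals from scratch.
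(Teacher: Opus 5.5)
Your proposal is correct and follows the paper's own route. The paper likewise solves the max--min-slack linear program over the $87{,}127$ types in $13{,}806$ unknowns (reported optimum $1/768$), rounds to rationals with denominators at most $10^{4}$, re-checks every inequality exactly over $\Q$, and uses an independent replay checker that recomputes facets, weights and conormals. One small correction to your wording: the LP is always feasible (take $s$ very negative), so the only real question is whether its optimum is positive, and in Step~1 you should keep only enumerated pointed types that still have a facet in $\mathcal T$.
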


The vectors were found by maximizing the minimum slack of the displayed system
(a linear program with $87{,}127$ inequalities in $13{,}806$ unknowns; the
optimum is $1/768$), rounding the solution to rational numbers, and re-checking
every inequality exactly over $\Q$; the exact minimum is
\[
  \frac{53920464734520931563295528363}{44832837720505982456018162718720}
  \approx 1.2027\times10^{-3},
\]
and there is no violation. The vectors are dense ($6{,}902$ of the $6{,}903$
are nonzero).
Here $u_{\sigma/\tau}$ is computed from a $\Z$-basis $(f_1,f_2)$ of
$\operatorname{lin}(\tau)^{\perp}\cap\Z^{6}$ (so that $x\mapsto(f_1\cdot x,f_2\cdot x)$
is a surjection $\Z^{6}\to\Z^{2}$ with kernel $N_\tau$) as the common primitive
image of the extreme rays of $\sigma$ outside $\tau$.

\begin{proof}[Proof of Theorem~\ref{thm:linear}]
Let $H$ be a full-dimensional five-part hive and, for each two-face $F$, let
$y_{N(H,F)}$ be the vector of Proposition~\ref{prop:certificate} if the type of
$F$ lies in $\mathcal T$ and $0$ otherwise. By \eqref{eq:corrected} it suffices
to show that every bracket is nonnegative. Let $E$ be an edge of type $\sigma$.
If some facet of $\sigma$ lies in $\mathcal T$ then $\sigma\in\Sigma_{\mathcal T}$
and the bracket is at least $1.20\times10^{-3}$ by
Proposition~\ref{prop:certificate}. Otherwise the bracket equals
$\alpha(\cone\sigma)$. If the pulling triangulation of $\sigma$ contains a
negative cell then $\sigma\in\Sigma^{-}$; it cannot lie in $\Sigma^{-}_{<0}$,
because every facet of such a cone is in $\mathcal T$, so
$\alpha(\cone\sigma)\ge0$. If it contains no negative cell then
$\alpha(\cone\sigma)$ is a sum of positive cell weights. Hence $a_1(H)\ge0$.
Rational hives are covered by Lemma~\ref{lem:dilation}.
\end{proof}

\subsection{Replay}
The certificate is replayed by an independent checker that does not share the
enumeration code: for each of the $87{,}127$ types it re-derives the facets by
a direct enumeration of hyperplanes, checks closedness, pointedness and rank,
recomputes $\alpha$ as the sum of cell weights over a triangulation pulled from
the \emph{last} extreme ray (the enumeration used the first), checks that every
listed facet has exactly the claimed atlas set, verifies that $(f_1,f_2)$ is
integral, orthogonal to $\tau$ and has coprime $2\times2$ minors (hence is a
$\Z$-basis of $\operatorname{lin}(\tau)^{\perp}\cap\Z^{6}$), recomputes $u$,
and evaluates the inequality exactly; it also checks that every cone of
$\Sigma^{-}_{<0}$ is among the constraint types. All checks pass. A separate
data-driven check confirmed that every actual edge cone of the $300$ pool hives
with a facet in $\mathcal T$ is one of the $87{,}127$ constraint types, and that
each of the $15$ actual negative edge cones lies in $\Sigma^{-}$.

\section{The general frontier}\label{sec:frontier}

For a period-one hive polytope the local formula writes every Ehrhart coefficient
as a pairing of nonnegative face volumes with Berline--Vergne/Todd weights on
transverse cones. Thus Conjecture~\ref{conj:ktt} follows from a single
hive-specific effectivity statement: for every rank, codimension $q$, and
$2$-connected primitive-interior closed flat-rhombus coarsening $\Sigma$ with
weight vector $a_q$ and Minkowski boundary map $\partial_q$,
\begin{equation}\label{eq:hte}
  \exists\, y:\quad a_q+\partial_q^{\mathsf T}y\ \ge\ 0. \tag{HTE}
\end{equation}
By rational Farkas duality \eqref{eq:hte} is equivalent to
$\langle a_q,w\rangle\ge 0$ for every nonnegative balanced realizable face weight
$w$; pairing it with the (positive) face-volume weight of any hive polytope makes
every coefficient nonnegative. Proposition~\ref{prop:certificate} is precisely a
solution $y$ of \eqref{eq:hte} for $q=5$ in rank five (at the level of types,
with $\partial_5$ the two-face closure \eqref{eq:polygon}), and
Proposition~\ref{prop:negedge} shows that the correction $y$ cannot be dispensed
with: from codimension five on, even the closed transverse cones of actual hives
can have negative weight, so a general proof must be a global effectivity
argument rather than a pointwise one. Horn factorization \cite{KTT2006,KTTfactor}
disposes of separating boundary coarsenings, so the load-bearing case is the
$2$-connected primitive interior. We have neither a proof of \eqref{eq:hte} in
general nor a negative realizable balanced weight refuting it, and the
certificate found here is dense, suggesting no closed form to conjecture.

On the disproof side, one transfer route deserves record because it is
\emph{valid} and still fails. Realizing three disjoint horizontal strips as a
skew shape identifies the lattice points of any $3\times N$ transportation
polytope with Littlewood--Richardson tableaux of a fixed triple, compatibly
with dilation: for example, the polytope with row margins $(3,3,3)$ and column
margins $(2,1^{7})$ has $L_T(n)=c^{\,n\nu}_{n\lam,\,n\mu}$ for
$\lam=(9,6,3)$, $\mu=(9,9,7,6,5,4,3,2,1)$, $\nu=(15,12,9,7,6,5,4,3,2,1)$
(both counts equal $1050$ at $n=1$). Hence a $3\times 8$ transportation
polytope with a negative Ehrhart coefficient would refute
Conjecture~\ref{conj:ktt} outright. None exists in the relevant regime: over
the full dimension-$14$, codegree-$3$ margin family the minimum linear
coefficient is $2157/280>0$ (exhaustive for total margin $\le 19$, with
saturation in every unbounded direction, and with the closed form
$\tfrac{3a}{2}\bigl(1+H_{2a-1}-H_a\bigr)>0$ for the symmetric unit-column
subfamily); moreover codegree $3$ forces $L_T(1)\ge 1050$, so the known
negative order polytope of Liu and Tsuchiya \cite{LiuTsuchiya}, which has
$255$ lattice points, is not realizable in this shape. The route evades the
positivity theorems for skew Gelfand--Tsetlin polytopes (its content
constraints are global sums, not markings) and dies only on these measured
facts.

We record, finally, that the standard rank-uniform shortcuts are individually
obstructed by exact examples: generic type-$A$ Todd/cocircuit balancing (a
nonnegative $A_4$ Dahmen--Micchelli polynomial maps to
$N^4+4N^3+3N^2-2N-\tfrac65$); network-flow equivalence (simple hive tangent cones
have determinant two); generic Cohen--Macaulay/Koszul Hilbert-ring structure (the
$Q_{20}$ Hibi ring is Gorenstein, Koszul, with negative $a$-invariant, yet its
Hilbert polynomial has linear coefficient $-168011/330$); matroidal/secondary-fan
Todd effectivity (a genuine hive face carries a $U_{2,4}$ tight-normal
restriction); positive tableau/Kostka recursions (they require signs, or stop on
primitive triples); and rank-by-rank Hurwitz stability (an unbounded hierarchy).
Six or more parts remain open below the top four coefficients; repeating the
argument of Section~\ref{sec:linear} in rank six would require corrections in
five codimensions simultaneously, which is the unbounded certificate cascade
that \eqref{eq:hte} is meant to replace. Details and the supporting computations
are in the accompanying reports.

\section{Reproducibility}\label{sec:repro}

Every finite claim above is verified by exact arithmetic and reproduced by
scripts distributed with this paper.
\begin{itemize}
\item Two independent Littlewood--Richardson counters (a hive lattice-point
enumerator and a lattice-word LR-tableau counter), cross-checked against each
other on all triples with $|\nu|\le 8$, against $c=1,2$ stretched values, and
against \cite{lrcalc}.
\item The rank-four atlas of $15$ normals and $99$ pairs, and
Proposition~\ref{prop:codim2} (all Berline--Vergne weights, minimum $1/9$).
\item An independent Ehrhart interpolation that recomputes $L_H(0),\dots,L_H(D+2)$
and confirms the interpolated polynomial at two held-out points, checked against
both LR counters; and a verification of $a_1=\sum_E\alpha\,\ell_{\Z}(E)$ against
lattice-count $a_1$ on more than $10^4$ four-part hives that are not among the
atlas basis polytopes.
\item The rank-uniform codimension-two and codimension-three enumerations, and
the full codimension-four five-part enumeration with the closure repairs.
\item For Section~\ref{sec:linear}: the cell census and the $887$-hive
validation (\texttt{r5\_codim5\_edge\_census.py}), the enumeration of
$\Sigma^{-}$ (\texttt{r5\_codim5\_coverage.py}), the enumeration of
$\Sigma_{\mathcal T}$ and the linear program (\texttt{r5\_codim5\_farkas.py}),
the certificate (\texttt{r5\_codim5\_farkas\_certificate.json}) and the
independent checker (\texttt{r5\_codim5\_verify\_certificate.py}). The
closure steps of the enumerations use the exact rational double-description
code of \texttt{cddlib} \cite{cddlib}; the checker does not. The linear
program was solved with HiGHS \cite{HiGHS}, which only proposes the
certificate; every verdict is an exact rational computation.
\item For Section~\ref{sec:transfer}: the symbolic check of the strict hive and
the finite consistency check of the transfer mechanism
(\texttt{chamber\_boundary\_transfer\_check.py}).
\end{itemize}
The dilation identity of Lemma~\ref{lem:dilation} and the nonunimodular vertex of
Section~\ref{sec:nonsimple} are likewise reproduced exactly.

The complete reproducibility bundle---the two Littlewood--Richardson counters,
the rank-four atlas and codimension-two/three enumerations, the five-part
codimension-four certificate, the five-part linear-coefficient certificate with
its checker, the transfer check, and the exact replay scripts for every
displayed value---is available at
\url{https://github.com/AlperTheKing/ktt-positivity}.

\end{document}